\numberwithin{equation}{section}
\patchcmd{\ttlh@hang}{\parindent\z@}{\parindent\z@\leavevmode}{}{}
\patchcmd{\ttlh@hang}{\noindent}{}{}{}
\theoremstyle{plain}
\newtheorem{theorem}{Theorem}[section]
\newtheorem{lemma}[theorem]{Lemma}
\theoremstyle{definition}
\newenvironment{example}
  {\pushQED{\qed}\examplex}
  {\popQED\endexamplex}
\theoremstyle{remark}
\def\XXint#1#2#3{{\setbox0=\hbox{$#1{#2#3}{\int}$ }
\vcenter{\hbox{$#2#3$ }}\kern-.6\wd0}}
\DeclareMathOperator{\Span}{\overline{span}}
\DeclareMathOperator{\vol}{vol}
\newcommand{\Hpi}{\mathcal{H}_{\pi}}
\title[Density conditions with stabilisers for lattice orbits]{Density conditions with stabilisers for lattice orbits of discrete series representations}
\author{Jordy Timo van Velthoven}
\address{Faculty of Mathematics,
University of Vienna, 
Oskar-Morgenstern-Platz 1,
1090 Vienna, Austria}
\email{jordy.timo.van.velthoven@univie.ac.at}
\subjclass[2020]{22D10, 22E30, 42C30}
\date{}
\keywords{Density condition, Frame, lattice, projective stabiliser, Riesz sequence}
\begin{document}

\begin{abstract}
This note provides elementary proofs for necessary density conditions for frames and Riesz sequences in the lattice orbit of a discrete series representation that involve the projective stabiliser of the vector. The presented approach extends and simplifies known density conditions for Bergman kernels and lattices in semisimple Lie groups.
\end{abstract}

\maketitle

\section{Introduction}
Let $G$ be a second countable unimodular group with Haar measure $\mu_G$ and let $(\pi, \Hpi)$ be a discrete series representation of $G$ of formal degree $d_{\pi} > 0$. For a vector $g \in \Hpi$ and a lattice $\Gamma \leq G$, the associated lattice orbit is the system
\[
\pi(\Gamma) g = \{ \pi (\gamma) g : \gamma \in \Gamma \}. 
\]
The relation between certain spanning properties of such systems and the lattice covolume $\vol(G/\Gamma)$
has been studied via various approaches in, e.g., \cite{bekka2004square, enstad2025dynamical, romero2022density}. In particular, it is known that if $\pi(\Gamma) g$ forms a \emph{frame} for $\Hpi$, then $\vol(G/\Gamma) d_{\pi} \leq 1$, whereas if $\pi(\Gamma) g$ forms a \emph{Riesz sequence} in $\Hpi$, then $\vol(G/\Gamma) d_{\pi} \geq 1$; see Section \ref{sec:prelim} for precise definitions. These results provide necessary conditions for the existence of frames and Riesz sequences in the orbit of a lattice in terms of the lattice covolume or its reciprocal, the so-called \emph{density} of the lattice. 

For certain classes of representations $(\pi, \Hpi)$ and groups $G$, the above mentioned necessary density conditions are sharp in the sense that for any lattice $\Gamma \leq G$ satisfying $\vol(G/\Gamma) d_{\pi} \leq 1$ (resp.~$\vol(G/\Gamma) d_{\pi} \geq 1$), there exists a vector $g \in \Hpi$ generating a frame (resp.~Riesz sequence) $\pi(\Gamma) g$, see, e.g.,  \cite{bekka2004square,enstad2022sufficient, enstad2022density}. However, these necessary density conditions might not be optimal for a particular fixed vector $g \in \Hpi$ yielding a frame or Riesz sequence $\pi(\Gamma) g$. The general question of finding optimal necessary density conditions for general complete systems $\pi(\Gamma) g$ that depend on the vector $g$ can be traced back to \cite{perelomov1972coherent}. For this, a particularly relevant example  (cf. Example \ref{ex:holomorphic}) is given by the holomorphic discrete series representations $\pi$ of $\mathrm{PSL}(2, \mathbb{R})$, for which the papers \cite{jones2023bergman, kellylyth1999uniform, seip1993beurling, perelomov1974coherent} show that for certain specific vectors $g \in \Hpi$ and lattices $\Gamma \leq \mathrm{PSL}(2, \mathbb{R})$ the completeness (hence, frame property) of $\pi(\Gamma) g$ implies that
\begin{align} \label{eq:stabiliser}
\vol(G/ \Gamma) d_{\pi} \leq |\Gamma_{[g]}|^{-1}, 
\end{align}
where $\Gamma_{[g]} := \{ \gamma \in \Gamma : \pi(\gamma) g \in \mathbb{C} \cdot g \}$ is the projective stabiliser group of $g$; see \cite[Section 9.1]{romero2022density} for a detailed discussion of the papers \cite{jones2023bergman, kellylyth1999uniform, seip1993beurling, perelomov1974coherent}. The necessary density condition \eqref{eq:stabiliser} for frames/complete systems has been extended to general vectors and discrete series representations of semisimple Lie groups in \cite{caspers2022density}. 
In contrast to a frame, a lattice orbit $\pi(\Gamma) g$ can only be a Riesz sequence when the projective stabiliser $\Gamma_{[g]}$ is trivial, since any Riesz sequence is linearly independent. Hence, in general, only systems of the form $\pi(\Lambda) g$, where $\Lambda \subseteq \Gamma$ is a fundamental domain for $\Gamma_{[g]}$, can be expected to be a Riesz sequence. 
For specific Riesz sequences $\pi(\Lambda) g$ in the orbit of $\mathrm{PSL}(2, \mathbb{R})$, it was shown in \cite{kellylyth1999uniform, seip1993beurling} that necessarily
\begin{align} \label{eq:stabiliser2}
\vol(G/ \Gamma) d_{\pi} \geq |\Gamma_{[g]}|^{-1};
\end{align}
see \cite{caspers2022density} for an extension to general semisimple Lie groups. 

The aim of the present note is to provide an elementary approach to the necessary density conditions with stabilisers, both for frames \eqref{eq:stabiliser} and for Riesz sequences \eqref{eq:stabiliser2}. The main result is as follows.

\begin{theorem} \label{thm:intro}
Suppose $\Gamma \leq G$ is a lattice and  $(\pi, \Hpi)$ is a discrete series representation of formal degree $d_{\pi} > 0$. 
Then, for any $g \in \Hpi \setminus \{0\}$, the projective stabiliser \[ \Gamma_{[g]} := \{ \gamma \in \Gamma : \pi(\gamma) g \in \mathbb{C} g \} \] is finite. In addition, given a fundamental domain $\Lambda \subseteq \Gamma$ for $\Gamma_{[g]}$, the following assertions hold:
\begin{enumerate}[(i)]
\item If $\pi(\Gamma) g$ is a frame for $\Hpi$, then
$
\vol(G/\Gamma) d_{\pi} \leq |\Gamma_{[g]}|^{-1}.
$
\item If $\pi(\Lambda) g$ is a Riesz sequence in $\Hpi$, then
$
\vol(G/\Gamma) d_{\pi} \geq |\Gamma_{[g]}|^{-1}.
$
\end{enumerate}
(The quantity $\vol(G/\Gamma) d_{\pi}$ is independent of the choice of Haar measure on $G$.)
\end{theorem}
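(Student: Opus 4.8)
The engine of the whole argument is the orthogonality relation $\int_G |\langle \pi(x) g, h\rangle|^2 \, d\mu_G(x) = d_{\pi}^{-1}\|g\|^2\|h\|^2$, which I would distill into a single averaging identity. Fix a Borel fundamental domain $\Omega \subseteq G$ for the left $\Gamma$-action, so $\mu_G(\Omega) = \vol(G/\Gamma)$, and for a Bessel vector $h$ let $S_h = \sum_{\gamma} \langle \cdot, \pi(\gamma)h\rangle \pi(\gamma)h$ be the frame operator of $\pi(\Gamma)h$. Writing $\langle \pi(x)h, \pi(\gamma)h\rangle = \langle \pi(\gamma^{-1}x)h, h\rangle$, reassembling the $\Gamma$-translates of $\Omega$ into $G$, and invoking the orthogonality relation gives
\[ \int_\Omega \langle S_h \pi(x) h, \pi(x) h\rangle \, d\mu_G(x) = \sum_{\gamma\in\Gamma} \int_\Omega |\langle \pi(\gamma^{-1}x)h,h\rangle|^2 \, d\mu_G(x) = \frac{\|h\|^4}{d_{\pi}}. \]
This identity (valid for any Bessel $h$ by Tonelli) is the only place the formal degree and covolume enter. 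The finiteness of $\Gamma_{[g]}$ I would prove first: the matrix coefficient $\phi(x) = \langle \pi(x)g, g\rangle$ satisfies $|\phi(\sigma x)| = |\phi(x)|$ for $\sigma\in\Gamma_{[g]}$ (as $\pi(\sigma)$ acts on $g$ by a unimodular scalar), while $\phi$ is continuous with $\phi(e)=\|g\|^2>0$ and $\|\phi\|_{L^2(G)}^2 = \|g\|^4/d_{\pi} < \infty$. Choosing a neighbourhood $U$ of $e$ on which $|\phi|\ge \|g\|^2/2$ and small enough that the translates $\{\sigma U\}_{\sigma\in\Gamma_{[g]}}$ are disjoint (possible since $\Gamma_{[g]}$ is discrete), invariance forces $\int_{\sigma U}|\phi|^2 \ge \tfrac14\|g\|^4\mu_G(U)$ for each $\sigma$, so an infinite $\Gamma_{[g]}$ would make $\|\phi\|_{L^2(G)}^2$ infinite.

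For (i), suppose $\pi(\Gamma)g$ is a frame with frame operator $S=S_g$. Since $S$ commutes with every $\pi(\gamma)$, the vector $\tilde g := S^{-1/2}g$ generates the canonical Parseval frame $\pi(\Gamma)\tilde g = S^{-1/2}\pi(\Gamma)g$ of $\Hpi$, with frame operator the identity. Feeding $\tilde g$ into the averaging identity collapses the left-hand side to $\|\tilde g\|^2\vol(G/\Gamma)$ and yields the exact value $\|\tilde g\|^2 = \vol(G/\Gamma)\,d_{\pi}$. On the other hand, because $\pi(\sigma)\tilde g = c_\sigma\tilde g$ with $|c_\sigma|=1$ for $\sigma\in\Gamma_{[g]}$, the Parseval identity applied to $\tilde g$ itself, keeping only the stabiliser terms, gives $\|\tilde g\|^2 \ge \sum_{\sigma\in\Gamma_{[g]}}|\langle\tilde g,\pi(\sigma)\tilde g\rangle|^2 = |\Gamma_{[g]}|\,\|\tilde g\|^4$, hence $\|\tilde g\|^2 \le |\Gamma_{[g]}|^{-1}$. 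Combining the two gives $\vol(G/\Gamma)\,d_{\pi}\le|\Gamma_{[g]}|^{-1}$.

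For (ii), let $\mathcal K = \overline{\mathrm{span}}\,\pi(\Gamma)g$, a $\pi(\Gamma)$-invariant subspace. Since $\pi(\lambda\sigma)g = c_\sigma\pi(\lambda)g$, the full orbit $\pi(\Gamma)g$ is a (finitely redundant, using finiteness of $\Gamma_{[g]}$) frame for $\mathcal K$; let $S^{\mathcal K}$ be its frame operator on $\mathcal K$ and set $\tilde g = (S^{\mathcal K})^{-1/2}g$, so $\pi(\Gamma)\tilde g$ is a Parseval frame for $\mathcal K$. The averaging identity now reads $\|\tilde g\|^4/d_{\pi} = \int_\Omega\|Q\pi(x)\tilde g\|^2\,d\mu_G(x)$ with $Q$ the projection onto $\mathcal K$; bounding $\|Q\pi(x)\tilde g\|\le\|\pi(x)\tilde g\|$ yields the inequality $\|\tilde g\|^2 \le \vol(G/\Gamma)\,d_{\pi}$. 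To pin down $\|\tilde g\|^2$ I would use the Riesz hypothesis: grouping the orbit into $\Gamma_{[g]}$-cosets shows $\pi(\Lambda)\tilde g$ is a $|\Gamma_{[g]}|^{-1}$-tight frame for $\mathcal K$, while being the image of the Riesz basis $\pi(\Lambda)g$ under the invertible $(S^{\mathcal K})^{-1/2}$ it is also a Riesz basis; a tight frame that is a Riesz basis is orthogonal with each $\|\pi(\lambda)\tilde g\|^2$ equal to the tight-frame bound, so $\|\tilde g\|^2 = |\Gamma_{[g]}|^{-1}$. Hence $\vol(G/\Gamma)\,d_{\pi}\ge|\Gamma_{[g]}|^{-1}$.

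The two halves are mirror images: frames give spanning of all of $\Hpi$, forcing the volume relation to be an \emph{equality} while leaving only an inequality for the stabiliser factor; Riesz sequences sacrifice the volume equality but, through linear independence, turn the stabiliser factor into an exact value. I expect the main difficulty to be bookkeeping rather than conceptual — confirming that $S$ (respectively $S^{\mathcal K}$) genuinely commutes with $\pi(\Gamma)$ so that $S^{-1/2}$ carries an orbit to an orbit, tracking the left/right coset conventions behind $\pi(\lambda\sigma)g=c_\sigma\pi(\lambda)g$, and, the one genuinely analytic point, justifying the interchange of sum and integral in the averaging identity, which is harmless here since every integrand is nonnegative.
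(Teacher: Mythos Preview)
Your proposal is correct and follows essentially the same route as the paper. Your averaging identity is the paper's Lemma~3.1, the canonical Parseval vector $\tilde g=S^{-1/2}g$ and the relation $S_\Gamma=|\Gamma_{[g]}|S_\Lambda$ (which you use implicitly when you say $\pi(\Lambda)\tilde g$ is a $|\Gamma_{[g]}|^{-1}$-tight frame) are exactly the paper's tools, and your arguments for (i) and (ii) match the paper's step for step; the only notable difference is your finiteness argument for $\Gamma_{[g]}$, which uses continuity of the matrix coefficient and discreteness, whereas the paper deduces it purely from the orthogonality relation by writing $d_\pi^{-1}\|g\|^4=|\Gamma_{[g]}|\int_\Omega|\langle g,\pi(x)g\rangle|^2\,d\mu_G(x)$ for a fundamental domain $\Omega$ of $\Gamma_{[g]}$ in $G$.
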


The proof of Theorem \ref{thm:intro} presented here is entirely self-contained and is based solely on basic theory of frames and Riesz bases. In contrast, the necessary density conditions with stabilisers for lattice orbits of $\mathrm{PSL}(2, \mathbb{R})$ in \cite{kellylyth1999uniform, perelomov1974coherent, seip1993beurling} and \cite{jones2023bergman, caspers2022density} are obtained using advanced techniques from complex analysis/symmetric spaces and von Neumann algebras, respectively. Our method forms a refinement of the elementary proof of the necessary conditions of the density theorem presented in \cite{romero2022density}. 
This method does not seem to give a similarly simple proof of the 
necessary density condition \eqref{eq:stabiliser} for merely complete lattice orbits.

Lastly, Theorem \ref{thm:intro} naturally extends to \emph{projective} discrete series representations (see Theorem \ref{thm:main}). The use of projective representations does not only allow to treat genuine representations that are merely square-integrable modulo their projective kernel, but also allows the treatment of the \emph{projective} holomorphic discrete series representations on weighted Bergman spaces in a direct manner.  The precise notions used in the present note are discussed in Section \ref{sec:prelim}. 

\section{Lattice orbits of projective discrete series representations} \label{sec:prelim}
Let $G$ be a second countable unimodular group with Haar measure $\mu_G$. A projective unitary representation $(\pi, \Hpi)$ of $G$ is a strongly measurable map $\pi : G \to \mathcal{U}(\Hpi)$ satisfying 
\[
\pi(x) \pi(y) = \sigma(x,y) \pi(xy) \quad \text{and} \quad \pi(e) = I_{\Hpi}
\]
for  all $x, y \in G$ and some function $\sigma : G \times G \to \mathbb{T}$, called the \emph{cocycle} of $\pi$.  A projective unitary representation is called \emph{irreducible} if $\{ 0 \}$ and $\Hpi$ are the only closed $\pi$-invariant subspaces of $\Hpi$, and it is called a \emph{projective discrete series representation} if it is irreducible and there exists $g \in \Hpi \setminus \{0\}$ such that
\[
\int_G |\langle g, \pi (x) g \rangle |^2 \; d\mu_G (x) < \infty. 
\]
In this case, there exists a unique $d_{\pi} > 0$, called the \emph{formal degree} of $\pi$, such that
\begin{align} \label{eq:ortho}
\int_G |\langle f, \pi(x) g \rangle |^2 \; d\mu_G (x) = d_{\pi}^{-1} \| f \|^2 \| g \|^2
\end{align}
for all $f, g \in \Hpi$. 

A \emph{fundamental domain} for a discrete subgroup $\Gamma \leq G$ is a Borel measurable set $\Omega \subseteq G$ such that
$
G = \bigcup_{\gamma \in \Gamma} \Omega \gamma$ and $ \Omega \gamma \cap \Omega \gamma' = \emptyset$ for all $\gamma, \gamma' \in \Gamma$ with $\gamma \neq \gamma'$. Every discrete subgroup admits a fundamental domain, 
and a discrete subgroup is called a \emph{lattice} if it admits a fundamental domain of finite measure. Any two fundamental domains have the same measure, and the \emph{covolume} $\vol(G/\Gamma)$ of a lattice $\Gamma$ is defined to be the measure of a fundamental domain. 

Given a discrete set $\Lambda \subseteq G$, a family $\pi(\Lambda) g$ is said to be a \emph{frame} for its span $\Span \pi(\Lambda) g$ if there exist $A, B > 0$, called \emph{frame bounds}, such that
\[
A \| f \|^2 \leq \sum_{\lambda \in \Lambda} |\langle f, \pi(\lambda) g \rangle |^2 \leq B \| f \|^2 \quad \text{for all} \quad f \in \Span \pi(\Lambda) g. 
\]
A system $\pi(\Lambda) g$ satisfying merely the upper frame bound is called a \emph{Bessel sequence} in $\Span \pi(\Lambda) g$, and a frame with frame bounds $A=B=1$ is called a \emph{Parseval frame} for $\Span \pi(\Lambda) g$. 
Equivalently, $\pi(\Lambda) g$ is a Bessel sequence in (resp.~a frame for) $\Span \pi(\Lambda) g$ if its associated \emph{frame operator} 
\[
S_{\Lambda} := \sum_{\lambda \in \Lambda} \langle \cdot , \pi(\lambda) g \rangle \pi(\lambda) g
\]
is bounded (resp.~boundedly invertible) on $\Span \pi(\Lambda) g$. If $\pi(\Lambda) g$ is a frame, then also $S_{\Lambda}^{-1} \pi(\Lambda) g$ (resp. $S_{\Lambda}^{-1/2} \pi(\Lambda) g$)  is a frame (resp. Parseval frame), called the \emph{canonical dual frame} (resp. \emph{canonical Parseval frame}) of $\pi(\Lambda) g$. A system $\pi(\Lambda) g$ is a \emph{Riesz sequence} in $\Hpi$ if there exist $A, B>0$, called \emph{Riesz bounds}, such that
\[
A \| c\|^2 \leq \bigg\| \sum_{\lambda \in \Lambda} c_{\lambda} \pi(\lambda) g \bigg\|^2 \leq B \| c \|^2 \quad \text{for all} \quad c \in \ell^2 (\Lambda). 
\]
Any Riesz sequence is a frame for its span and its canonical dual frame is a biorthogonal system. 

\section{Proof of Theorem \ref{thm:intro}} 
We start by recalling the following basic lemma (cf.~\cite[Proposition 7.2]{romero2022density}), whose short proof we provide for the sake of being self-contained.

\begin{lemma} \label{lem:basic}
Let $(\pi, \Hpi)$ be a projective discrete series representation of formal degree $d_{\pi} > 0$. 
Let $\Gamma \leq G$ be a lattice and let $g \in \Hpi$. If $\pi(\Gamma) g$ is a Bessel sequence in $\Hpi$ with bound $B$, then $d_{\pi}^{-1} \| g \|^2 \leq B \vol(G/\Gamma)$. If $\pi (\Gamma) g$ is also a frame for $\Hpi$ with lower frame bound $A$, then
\[
A \vol(G/\Gamma) \leq d_{\pi}^{-1} \| g \|^2 \leq B \vol(G/\Gamma).
\]
\end{lemma}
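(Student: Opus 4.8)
The plan is to exploit the orthogonality relation \eqref{eq:ortho} to compute two traces of the frame operator $S_\Gamma$ restricted to the span of $\pi(\Gamma)g$, and to compare them via the Bessel/frame inequalities. Concretely, I would integrate the pointwise Bessel inequality
\[
\sum_{\gamma \in \Gamma} |\langle f, \pi(\gamma) g \rangle|^2 \leq B \| f \|^2
\]
against a suitable test. The key observation, which is the heart of the elementary argument in \cite{romero2022density}, is that for a fixed vector $h \in \Hpi$ one can average the coefficient $\sum_{\gamma} |\langle \pi(x) h, \pi(\gamma) g \rangle|^2$ over a fundamental domain $\Omega$ for $\Gamma$. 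Because $\{\Omega \gamma : \gamma \in \Gamma\}$ tiles $G$, the change of variables $x \mapsto x\gamma^{-1}$ turns the sum-over-$\Gamma$ integral over $\Omega$ into a single integral over all of $G$, at which point \eqref{eq:ortho} applies and produces the clean value $d_\pi^{-1}\|h\|^2\|g\|^2$.

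First I would fix a unit vector $h \in \Hpi$ and set $f = \pi(x) h$, noting that $\|\pi(x)h\| = \|h\|$ since $\pi(x)$ is unitary, so the Bessel upper bound gives $\sum_{\gamma} |\langle \pi(x)h, \pi(\gamma)g\rangle|^2 \leq B\|h\|^2$ for every $x$. Integrating this over the fundamental domain $\Omega$ yields an upper bound of $B \|h\|^2 \vol(G/\Gamma)$ on the left-hand side. Next I would evaluate that left-hand side exactly: interchanging sum and integral (justified by Tonelli, as all terms are nonnegative) gives $\sum_{\gamma} \int_\Omega |\langle \pi(x)h, \pi(\gamma)g\rangle|^2\, d\mu_G(x)$, and each summand rewrites, using the cocycle relation and unitarity to move $\pi(\gamma)$ across the inner product, as $\int_{\Omega\gamma} |\langle \pi(y)h, g\rangle|^2 \, d\mu_G(y)$ after substituting $y = x\gamma$ (invoking unimodularity for translation invariance of $\mu_G$). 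Summing over $\gamma$ reassembles the integral over all of $G = \bigsqcup_\gamma \Omega\gamma$, so by \eqref{eq:ortho} the exact value is $d_\pi^{-1}\|h\|^2\|g\|^2$. Comparing the exact value with the Bessel upper bound and cancelling $\|h\|^2$ delivers $d_\pi^{-1}\|g\|^2 \leq B \vol(G/\Gamma)$.

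For the frame case I would run the identical computation with the lower frame bound $A$ in place of $B$: the pointwise lower inequality $A\|\pi(x)h\|^2 \leq \sum_\gamma |\langle \pi(x)h, \pi(\gamma)g\rangle|^2$ integrates over $\Omega$ to give $A\|h\|^2\vol(G/\Gamma) \leq d_\pi^{-1}\|h\|^2\|g\|^2$, which after cancellation is the remaining inequality $A \vol(G/\Gamma) \leq d_\pi^{-1}\|g\|^2$. Combining the two chains yields the stated sandwich.

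The main technical obstacle I anticipate is the careful bookkeeping in the change of variables, specifically ensuring that the cocycle factor $\sigma(\cdot,\cdot)$ introduced by the projective relation $\pi(x)\pi(\gamma) = \sigma(x,\gamma)\pi(x\gamma)$ cancels. Since it appears only as a unimodular scalar, it disappears upon taking the modulus squared, so it poses no real difficulty—but this is the step that must be spelled out to make the proof valid in the projective setting rather than just for genuine representations. A secondary point requiring care is the measure-theoretic justification for swapping the sum and integral and for the translation invariance of $\mu_G$ under right translation, which is exactly where unimodularity of $G$ is used.
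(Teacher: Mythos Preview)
Your strategy is exactly the paper's: integrate the Bessel/frame inequality over a fundamental domain and use the tiling of $G$ together with \eqref{eq:ortho} to evaluate the resulting double sum/integral exactly. The only slip is in the bookkeeping step you yourself flagged as the main obstacle. Moving $\pi(\gamma)$ across the inner product gives $|\langle \pi(x)h, \pi(\gamma)g\rangle| = |\langle \pi(\gamma^{-1}x)h, g\rangle|$, \emph{not} $|\langle \pi(x\gamma)h, g\rangle|$; the correct substitution is therefore $y=\gamma^{-1}x$ with image $\gamma^{-1}\Omega$, and the disjoint union $\bigsqcup_{\gamma} \gamma^{-1}\Omega = G$ then requires $\Omega$ to be a \emph{left} fundamental domain rather than the right one you set up. The paper avoids this wrinkle by running the identity in the opposite direction: it decomposes $d_\pi^{-1}\|f\|^2\|g\|^2 = \int_G |\langle f,\pi(y)g\rangle|^2\,d\mu_G(y)$ over the right fundamental domain as $\int_\Omega \sum_{\gamma} |\langle \pi(x)^* f, \pi(\gamma)g\rangle|^2\,d\mu_G(x)$ and then applies the frame bounds to the vector $\pi(x)^* f$, so that $\pi(\gamma)$ stays on $g$ throughout and no further substitution is needed.
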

\begin{proof}
Let $f \in \Hpi \setminus \{0\}$. If $\Omega \subseteq G$ is a fundamental domain for $\Gamma$, then, using the orthogonality relations \eqref{eq:ortho}, a direct calculation gives
\begin{align*}
d_{\pi}^{-1} \| f \|^2 \| g \|^2 = \sum_{\gamma \in \Gamma} \int_{\Omega} |\langle f, \pi(x \gamma) g \rangle |^2 \; d\mu_G (x) = \int_{\Omega} \sum_{\gamma \in \Gamma} |\langle \pi(x)^* f, \pi(\gamma) g \rangle |^2 \; d\mu_G (x). 
\end{align*}
Hence, using the upper frame bound  $B > 0$ for $\pi(\Gamma) g$ gives 
$
d_{\pi}^{-1} \| f \|^2 \| g \|^2 \leq B \mu_G (\Omega) \| f \|^2,
$
and, if $\pi(\Gamma) g$ also has a lower bound $A > 0$, then also $
d_{\pi}^{-1} \| f \|^2 \| g \|^2 \geq A \mu_G (\Omega) \| f \|^2.
$
\end{proof}

Using Lemma \ref{lem:basic}, we are able to provide a simple proof of Theorem \ref{thm:intro}. 

\begin{theorem} \label{thm:main}
Suppose $\Gamma \leq G$ is a lattice and $(\pi, \Hpi)$ is a projective discrete series representation of formal degree $d_{\pi} > 0$. Then, for any $g \in \Hpi \setminus \{0\}$, the projective stabiliser 
\[ \Gamma_{[g]} := \{ \gamma \in \Gamma : \pi(\gamma) g \in \mathbb{C} g \} \] is finite.
 In addition, if $\Lambda$ is a fundamental domain for $ \Gamma_{[g]}$ in $\Gamma$, then the following assertions hold:
\begin{enumerate}[(i)]
\item If $\pi(\Gamma) g$ is a frame for $\Hpi$, then
$
\vol(G/\Gamma) d_{\pi} \leq |\Gamma_{[g]}|^{-1}.
$
\item If $\pi(\Lambda) g$ is a Riesz sequence in $\Hpi$, then
$
\vol(G/\Gamma) d_{\pi} \geq |\Gamma_{[g]}|^{-1}.
$
\end{enumerate}
(The quantity $\vol(G/\Gamma) d_{\pi}$ is independent of the choice of Haar measure on $G$.)
\end{theorem}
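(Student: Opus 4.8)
The plan is to deduce both bounds from Lemma~\ref{lem:basic} by replacing $g$ with the window generating the canonical Parseval frame of a lattice orbit, and by exploiting the elementary fact that within each coset of $\Gamma_{[g]}$ the vectors $\pi(\gamma)g$ are mutually unimodular scalar multiples. Concretely, writing $\pi(h)g=c_h g$ with $|c_h|=1$ for $h\in\Gamma_{[g]}$, the cocycle identity gives $\pi(xh)g=\sigma(x,h)^{-1}c_h\,\pi(x)g$ for every $x\in G$; in particular each $\Gamma_{[g]}$-coset consists of unimodular multiples of a single vector, and $\Gamma_{[g]}$ is a subgroup of $\Gamma$ (closed under products and inverses by the cocycle identity), hence discrete.

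For the finiteness assertion I would periodise the matrix coefficient. The identity above gives $|\langle g,\pi(xh)g\rangle|=|\langle g,\pi(x)g\rangle|$ for all $x\in G$ and $h\in\Gamma_{[g]}$, so $\Phi:=|\langle g,\pi(\cdot)g\rangle|^2$ is right $\Gamma_{[g]}$-invariant. Choosing a fundamental domain $\mathcal{D}$ for the discrete group $\Gamma_{[g]}$ and using that $\mu_G$ is unimodular, one obtains
\[
d_\pi^{-1}\|g\|^4=\int_G\Phi\,d\mu_G=|\Gamma_{[g]}|\int_{\mathcal{D}}\Phi\,d\mu_G .
\]
The left-hand side is finite by square-integrability and strictly positive, while $\int_{\mathcal{D}}\Phi\,d\mu_G>0$ (otherwise $\Phi$ would vanish a.e.\ on $G$); hence $|\Gamma_{[g]}|<\infty$. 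Write $N:=|\Gamma_{[g]}|$.

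For part (i), suppose $\pi(\Gamma)g$ is a frame for $\Hpi$ with frame operator $S$. A one-line reindexing using the cocycle shows that $S$ commutes with $\pi(\Gamma)$, so the canonical Parseval frame is the lattice orbit $\pi(\Gamma)h$ with $h:=S^{-1/2}g$; Lemma~\ref{lem:basic} with $A=B=1$ then gives $\vol(G/\Gamma)d_\pi=\|h\|^2$. Since $S^{-1/2}$ commutes with $\pi(\Gamma)$ we have $\Gamma_{[g]}\subseteq\Gamma_{[h]}$, so restricting the Parseval identity $\sum_{\gamma\in\Gamma}|\langle h,\pi(\gamma)h\rangle|^2=\|h\|^2$ to $\gamma\in\Gamma_{[g]}$, where each summand equals $\|h\|^4$, yields $\|h\|^2\ge N\|h\|^4$, that is, $\vol(G/\Gamma)d_\pi=\|h\|^2\le N^{-1}$.

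For part (ii) the same self-bound only produces $\le N^{-1}$, so the crux is to reverse the inequality by feeding in the linear independence of the Riesz sequence. Decomposing $\Gamma=\bigsqcup_{\lambda\in\Lambda}\lambda\Gamma_{[g]}$ and using the unimodular-multiple fact gives $\sum_{\gamma\in\Gamma}|\langle f,\pi(\gamma)g\rangle|^2=N\sum_{\lambda\in\Lambda}|\langle f,\pi(\lambda)g\rangle|^2$ for all $f$. If $\pi(\Lambda)g$ is a Riesz sequence, it is a Riesz basis for $V:=\Span\pi(\Lambda)g$, so $\pi(\Gamma)g$ is a frame for $V$ whose frame operator is $S_\Gamma=N\,S_\Lambda$; as $S_\Gamma$ again commutes with $\pi(\Gamma)$ and $V$ is $\pi(\Gamma)$-invariant, the canonical Parseval frame for $V$ is $\pi(\Gamma)w$ with $w=S_\Gamma^{-1/2}g=N^{-1/2}S_\Lambda^{-1/2}g$. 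A Parseval frame for the $\pi(\Gamma)$-invariant subspace $V$ is automatically Bessel for $\Hpi$ with bound $1$, so Lemma~\ref{lem:basic} gives $\vol(G/\Gamma)d_\pi\ge\|w\|^2$. Finally, biorthogonality of the canonical dual of the Riesz basis $\pi(\Lambda)g$ forces $\|S_\Lambda^{-1/2}\pi(\lambda)g\|=1$ for every $\lambda$; taking the representative $\lambda_0\in\Lambda\cap\Gamma_{[g]}$, for which $\pi(\lambda_0)g\in\mathbb{C}g$, gives $\|S_\Lambda^{-1/2}g\|=1$ and hence $\|w\|^2=N^{-1}$. I expect this last step to be the main obstacle: the lower bound requires pinning $\|S_\Lambda^{-1/2}g\|$ exactly to $1$ through the orthonormality forced by the Riesz property, rather than merely estimating it as in (i). The Haar-independence remark is immediate, since rescaling $\mu_G$ by $c>0$ multiplies $\vol(G/\Gamma)$ by $c$ and $d_\pi$ by $c^{-1}$.
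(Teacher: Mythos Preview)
Your argument is correct and follows essentially the same route as the paper: finiteness via periodising $|\langle g,\pi(\cdot)g\rangle|^2$ over $\Gamma_{[g]}$, then in both parts passing to the canonical Parseval frame of the lattice orbit, invoking Lemma~\ref{lem:basic}, and using the relation $S_\Gamma=|\Gamma_{[g]}|\,S_\Lambda$ together with the Parseval/orthonormality property of $S_\Lambda^{-1/2}\pi(\Lambda)g$. Your part~(i) is even slightly more streamlined than the paper's, since you obtain $\|h\|^2\le|\Gamma_{[g]}|^{-1}$ by restricting the $\Gamma$-Parseval sum to $\Gamma_{[g]}$ directly, without ever introducing $S_\Lambda$; the paper instead rewrites $\|S_\Gamma^{-1/2}g\|^2=|\Gamma_{[g]}|^{-1}\|S_\Lambda^{-1/2}\pi(\lambda)g\|^2$ and bounds the latter by $1$ using the $\Lambda$-Parseval identity, which is the same inequality under a change of normalisation.
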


\begin{proof}
Throughout the proof, let $u : \Gamma_{[g]} \to \mathbb{T}$ be the function satisfying $\pi(\gamma) g = u(\gamma) g$ for $\gamma \in \Gamma_{[g]}$. If $\Omega \subseteq G$ is a fundamental domain for $\Gamma_{[g]}$, then the identity \eqref{eq:ortho} yields
\begin{align*}
d_{\pi}^{-1} \| g \|^4 &= \int_G |\langle g, \pi(x) g \rangle |^2 \; d\mu_G (x) = \sum_{\gamma \in \Gamma_{[g]}} \int_{\Omega} |\langle g, \pi(x) u(\gamma) g \rangle |^2 \; d\mu_G (x) \\
&= |\Gamma_{[g]}|  \int_{\Omega} |\langle g, \pi(x) g \rangle |^2 \; d\mu_G (x),
\end{align*}
which shows that $\Gamma_{[g]}$ must be finite. 
Since $\Lambda \subseteq \Gamma$ is a fundamental domain for $\Gamma_{[g]}$ in $\Gamma$, each $\gamma \in \Gamma$ can be written as $\gamma = \lambda \gamma'$ for some unique $\lambda \in \Lambda$ and $\gamma' \in \Gamma_{[g]}$. Therefore, the map $(\lambda, \gamma') \mapsto \lambda \gamma'$ is a bijection from $\Lambda \times \Gamma_{[g]}$ onto $\Gamma$, and 
\[
\Span \pi(\Gamma) g = \Span \{ \overline{\sigma(\lambda, \gamma')} \pi(\lambda) u( \gamma') g : \lambda \in \Lambda, \gamma' \in \Gamma_{[g]} \} = \Span \pi (\Lambda) g .
\] 
In addition, the frame operators $S_{\Lambda}, S_{\Gamma} : \Span \pi(\Lambda) g \to \Span \pi(\Lambda) g$ associated to the systems $\pi(\Lambda) g$ and $\pi(\Gamma) g$ are related by
\begin{align*}
S_{\Gamma} &= \sum_{\gamma \in \Gamma} \langle \cdot , \pi(\gamma) g \rangle \pi(\gamma) g = \sum_{\lambda \in \Lambda} \sum_{\gamma' \in \Gamma_{[g]}} \langle \cdot , \pi(\lambda \gamma') g \rangle \pi(\lambda \gamma') g \\
&= \sum_{\lambda \in \Lambda} \sum_{\gamma' \in \Gamma_{[g]}} |\sigma(\lambda, \gamma')|^2 |u(\gamma')|^2 \langle \cdot, \pi(\lambda) g \rangle \pi(\lambda) g \\
&= |\Gamma_{[g]}| S_{\Lambda}.
\end{align*}
 Hence, $\pi(\Gamma) g$ is a frame for $\Span \pi(\Gamma) g$ if and only if $\pi(\Lambda) g$ is a frame for $\Span \pi(\Lambda) g$. In this case, the associated canonical Parseval frames $S_{\Gamma}^{-1/2} \pi(\Gamma) g$ and $S_{\Lambda}^{-1/2} \pi(\Lambda) g$ satisfy 
 \begin{align} \label{eq:parseval_norms}
\| S_{\Gamma}^{-1/2} \pi(\lambda \gamma') g \|^2 = \| S_{\Gamma}^{-1/2} \pi(\lambda) u(\gamma') g \|^2 = |\Gamma_{[g]}|^{-1} \| S_{\Lambda}^{-1/2} \pi(\lambda) g \|^2
\end{align}
for all $(\lambda, \gamma') \in \Lambda \times \Gamma_{[g]}$. 
Lastly, we note that if $\pi(\Gamma) g$ is a frame for $\Span \pi(\Gamma) g$, then its frame operator $S_{\Gamma}$ satisfies $S_{\Gamma} \pi (\gamma) = \pi(\gamma) S_{\Gamma}$ for all $\gamma \in \Gamma$, and hence also $S^{-1/2}_{\Gamma}$ and $S^{-1}_{\Gamma}$ commute with each operator $\pi(\gamma)$ with $\gamma \in \Gamma$.  
\\~\\
(i) Suppose that $\pi(\Gamma) g$ is a frame for $\Hpi$. Then $ \pi(\Gamma) S^{-1/2}_{\Gamma} g = S^{-1/2}_{\Gamma} \pi(\Gamma) g$ is a frame for $\Hpi$ with frame bounds $A=B=1$, and hence an application of Lemma \ref{lem:basic} gives
\[
 \vol(G/\Gamma) d_{\pi} = \| S^{-1/2}_{\Gamma} g \|^2.
\]
Using Equation \eqref{eq:parseval_norms}  gives
\begin{align*}
 \vol(G/\Gamma) d_{\pi} &= \| \pi(\lambda \gamma') S^{-1/2}_{\Gamma} g \|^2 = \|  S^{-1/2}_{\Gamma} \pi(\lambda \gamma') g \|^2 \\
 &= |\Gamma_{[g]}|^{-1} \| S_{\Lambda}^{-1/2} \pi(\lambda) g \|^2 
 \end{align*}
for $(\lambda, \gamma') \in \Lambda \times \Gamma_{[g]}$. Since $S_{\Lambda}^{-1/2} \pi(\Lambda) g$ is a Parseval frame for $\Hpi$, it follows that
\[
\| S_{\Lambda}^{-1/2} \pi(\lambda) g \|^4 \leq \sum_{\lambda' \in \Lambda} |\langle S_{\Lambda}^{-1/2} \pi(\lambda) g, S_{\Lambda}^{-1/2} \pi(\lambda') g \rangle |^2 = \| S_{\Lambda}^{-1/2} \pi(\lambda) g \|^2,
\]
and thus $\| S_{\Lambda}^{-1/2} \pi(\lambda) g \|^2 \leq 1$ for all $\lambda \in \Lambda$. A combination of the above observations gives 
$
 \vol(G/\Gamma) d_{\pi} \leq |\Gamma_{[g]}|^{-1},
$
as required.
\\~\\
(ii) Suppose that $\pi(\Lambda) g$ is a Riesz sequence in $\Hpi$. Then $\pi(\Lambda) g$ is a Riesz basis for  $\Span \pi(\Lambda) g$. In particular, the frame operator $S_{\Lambda} : \Span \pi (\Lambda) g \to \Span (\Lambda) g$ is invertible, and the canonical dual frame $S^{-1}_{\Lambda} \pi(\Lambda) g$ in $\Span \pi(\Lambda) g$ is a biorthogonal system for $\pi(\Lambda) g$. Therefore,
\begin{align} \label{eq:orthonormal}
\langle S^{-1/2}_{\Lambda} \pi(\lambda) g , S^{-1/2}_{\Lambda} \pi(\lambda') g \rangle = \langle  \pi(\lambda) g , S^{-1}_{\Lambda} \pi(\lambda') g \rangle = \delta_{\lambda, \lambda'}
\end{align}
for $\lambda, \lambda' \in \Lambda$, which shows that $S^{-1/2}_{\Lambda} \pi(\Lambda) g$ is an orthonormal system in $\Hpi$; in particular, it is a Bessel sequence in $\Hpi$ with upper bound $1$. Since $S_{\Gamma} = |\Gamma_{[g]}| S_{\Lambda}$ as operators on $\Span \pi(\Lambda) g = \Span \pi(\Gamma) g$, it follows that also $S_{\Gamma}$ is positive and invertible on $\Span \pi(\Gamma) g$, so that $S_{\Gamma}^{-1/2}$ is well-defined on $\Span \pi(\Gamma) g$. A combination of these observations 
yields, for $f \in \Hpi$, that
\begin{align*}
\sum_{\gamma \in \Gamma} | \langle f, S^{-1/2}_{\Gamma} \pi(\gamma) g \rangle |^2 &= \sum_{\lambda \in \Lambda} \sum_{\gamma' \in \Gamma_{[g]}} | \langle f, S_{\Gamma}^{-1/2} \pi(\lambda) u(\gamma') g \rangle |^2 \\
&= \sum_{\lambda \in \Lambda} \sum_{\gamma' \in \Gamma_{[g]}}  |\langle f, |\Gamma_{[g]}|^{-1/2} S_{\Lambda}^{-1/2} \pi(\lambda) g \rangle |^2 \\
&= \sum_{\lambda \in \Lambda}  |\langle f, S_{\Lambda}^{-1/2} \pi(\lambda) g \rangle |^2 \\
&\leq \| f \|^2.
\end{align*} 
Hence, the system $ \pi(\Gamma) S^{-1/2}_{\Gamma} g = S^{-1/2}_{\Gamma} \pi(\Gamma) g$ is also a Bessel sequence in $\Hpi$ with Bessel bound $1$. An application of Lemma \ref{lem:basic} thus gives
\[
 \vol(G/\Gamma) d_{\pi} \geq \| S^{-1/2}_{\Gamma} g \|^2.
\]
Combining this with the identity \eqref{eq:parseval_norms} yields, for $(\lambda, \gamma') \in \Lambda \times \Gamma_{[g]}$, 
\begin{align*}
 \vol(G/\Gamma) d_{\pi} \geq \| \pi(\lambda \gamma') S^{-1/2}_{\Gamma} g \|^2 = |\Gamma_{[g]}|^{-1} \| S^{-1/2}_{\Lambda} \pi(\lambda) g \|^2 = |\Gamma_{[g]}|^{-1},
\end{align*}
where the last equality used the identity \eqref{eq:orthonormal}.
\end{proof}

The following example demonstrates Theorem \ref{thm:main} for holomorphic discrete series representations of $\mathrm{PSL}(2, \mathbb{R})$. This setting exemplifies the role of the projective stabilisers in Theorem \ref{thm:main} and allows to relate them to stabilisers of the action of the group on the upper half-plane. See the papers \cite{jones2023bergman, kellylyth1999uniform, seip1993beurling, perelomov1974coherent} and the overview \cite[Section 9.1]{romero2022density} for the related original results in this setting.

\begin{example} \label{ex:holomorphic}
The group $G = \mathrm{PSL}(2, \mathbb{R})$ acts on the upper half-plane $\mathbb{C}^+$ through the action
\[
G \times \mathbb{C}^+ \ni \big( 
\begin{pmatrix}
a & b \\
c & d
\end{pmatrix}
, z) 
\mapsto \frac{az+b}{cz+d} \in \mathbb{C}^+.
\]
A $G$-invariant measure on $\mathbb{C}^+$ is given by $d\mu(z) = (\Im(z))^{-1} dz$, where $dz$ is the Lebesgue measure on $\mathbb{C}^+$. For $\alpha > 1$, define the weighted measure $d\mu_{\alpha}(z) = (\Im(z))^{\alpha-1} d\mu(z)$ and the associated weighted Bergman space $A^2_{\alpha} (\mathbb{C}^+)$ as the space of holomorphic functions $f : \mathbb{C}^+ \to \mathbb{C}$ satisfying
\[
\| f \|^2_{A^2_{\alpha}} := \int_{\mathbb{C}^+} |f(z)|^2 \; d\mu_{\alpha}(z) < \infty. 
\]
The space $A^2_{\alpha} (\mathbb{C}^+)$ is a reproducing kernel Hilbert space with respect to the norm $\| \cdot \|_{A^2_{\alpha}}$. 
For $\alpha >1$, the \emph{holomorphic discrete series representation} $\pi_{\alpha}$ of $G$ on $A^2_{\alpha} (\mathbb{C}^+)$ is given by the projective unitary representation defined by
\[
\pi_{\alpha} (\mathrm{m}) f(z) = j(\mathrm{m}^{-1}, z)^{\alpha} f(\mathrm{m}^{-1} \cdot z), \quad \mathrm{m} = 
\begin{pmatrix} 
a & b \\
c & d
\end{pmatrix} \in G, \; z \in \mathbb{C}^+,
\]
where $j(m, z) := (cz+d)^{-1}$. 
Each representation $\pi_{\alpha}$ is irreducible and square-integrable.

Let $k_z^{(\alpha)} (w) = 2^{\alpha - 2} \pi^{-1} (\alpha - 1) i^{\alpha} (w - z)^{-\alpha}$ denote the reproducing kernel function in $ A^2_{\alpha} (\mathbb{C}^+)$ at $z \in \mathbb{C}^+$, so that
\[
f(z) = \langle f, k_z^{(\alpha)} \rangle, \quad f \in A^2_{\alpha} (\mathbb{C}^+), \; z \in \mathbb{C}^+.
\]
If $\sigma_{\alpha} : G \times G \to \mathbb{T}$ denotes the cocycle of $\pi_{\alpha}$, then a direct calculation gives 
\begin{align*}
\langle f, \pi_{\alpha} (\mathrm{m}) k_z^{(\alpha)} \rangle =  
\overline{\sigma(\mathrm{m}, \mathrm{m}^{-1})} \big(\pi_{\alpha} (\mathrm{m}^{-1}) f\big)(z) 
= \overline{\sigma(\mathrm{m}, \mathrm{m}^{-1})} j(\mathrm{m}, z)^{\alpha} \langle f, k_{\mathrm{m} \cdot z}^{(\alpha)} \rangle, \quad \mathrm{m} \in G, \; z \in \mathbb{C}^+,
\end{align*}
for any $f \in A^2 (\mathbb{C}^+)$, so that 
\begin{align} \label{eq:action_rkhs}
\pi_{\alpha}(m) k_z^{(\alpha)} = 
\sigma(\mathrm{m}, \mathrm{m}^{-1}) \overline{j(\mathrm{m}, z)^{\alpha} }k_{\mathrm{m} \cdot z}^{(\alpha)}, \quad \mathrm{m} \in G, \; z \in \mathbb{C}^+. 
\end{align}
Therefore, if $\Gamma \leq G$ is a lattice, then the stabiliser subgroup of a point $z \in \mathbb{C}^+$ satisfies
\[
\Gamma_z := \{ \gamma \in \Gamma : \gamma \cdot z = z \} \subseteq \Gamma_{[k_z^{\alpha}]}. 
\]
For the reverse inclusion, let $u : \Gamma_{[k_z^{\alpha}]} \to \mathbb{T}$ be such that $\pi_{\alpha} (\gamma) k_z^{\alpha} = u(\gamma) k_z^{\alpha}$ for all $\gamma \in \Gamma_{[k_z^{\alpha}]}$. If $\gamma \in \Gamma_{[k_z^{\alpha}]}$, then \eqref{eq:action_rkhs} yields
\[
u(\gamma) k_z^{(\alpha)} = \sigma(\gamma, \gamma^{-1}) \overline{j(\gamma,z)^{\alpha}} k^{(\alpha)}_{\gamma \cdot z}
\]
which implies that $\gamma \cdot z = z$ by analyticity, and thus $\Gamma_z = \Gamma_{[k_z^{\alpha}]}$. 
Therefore, if $\pi_{\alpha} (\Gamma) k_z^{(\alpha)}$ forms a frame for $A^2_{\alpha} (\mathbb{C}^+)$, then
$
\vol(G/\Gamma) d_{\pi_{\alpha}} \leq |\Gamma_z|^{-1}.
$
Similarly, if $\Lambda \subseteq \Gamma$ is a fundamental domain for $\Gamma_z$ and if $\pi_{\alpha} (\Lambda) k_z^{(\alpha)}$ is a Riesz sequence in $A^2_{\alpha} (\mathbb{C}^+)$, then 
$
\vol(G/\Gamma) d_{\pi_{\alpha}} \geq |\Gamma_z|^{-1}
$. Cf. Theorem \ref{thm:main}. 
\end{example}

\section*{Acknowledgements}
This research was funded in whole or in part by the Austrian Science Fund (FWF):
10.55776/PAT2545623.

\bibliographystyle{abbrv}
\bibliography{bib}

\end{document}